\newtheorem{theorem}{Theorem}[section]
\newtheorem{corollary}[theorem]{Corollary}
\newtheorem{lemma}[theorem]{Lemma}
\newtheorem{proposition}[theorem]{Proposition}
\theoremstyle{definition}
\theoremstyle{remark}
\newtheorem{remark}[theorem]{Remark}
\numberwithin{equation}{section}
\begin{document}

\title[Stochastic Lie algebra]{On the stochastic Lie algebra}%
\author{Manuel Guerra, Andrey Sarychev}%
\address{ISEG and CEMAPRE, ULisboa, Rua do Quelhas 6, 1200-781 Lisboa, Portugal \\
University of Florence, DiMaI, v. delle Pandette 9, Firenze, 50127, Italy}%
\email{mguerra@iseg.ulisboa.pt,asarychev@unifi.it}%

\thanks{Manuel Guerra was partly supported by FCT/MEC through the project CEMAPRE – UID/MULTI/00491/2013.}%
\subjclass{}%
\keywords{}%

\begin{abstract}
We study the structure of the Lie algebra $\mathfrak{s}(n,\mathbb R)$ corresponding to the so-called stochastic Lie group $\mathcal{S} (n,\mathbb R)$.
We obtain the Levi decomposition of the Lie algebra, classify  Levi factor and classify  the representation of the  factor in $\mathbb{R}^n$. We discuss isomorphism  of $\mathcal{S}(n,\mathbb R)$ with the group of invertible affine maps ${\it Aff}(n-1,\mathbb R)$.
We prove that $\mathfrak s(n, \mathbb R)$ is generated by two generic elements.
\end{abstract}
\maketitle

\section{Stochastic Lie group and stochastic Lie algebra}
\label{S introduction}

Let $\mathcal S_0^+(n,\mathbb R)$ denote the space of  \emph{transition matrices} of size $n$, i.e., the space of real $n \times n$ matrices with all entries non-negative and row sums equal to $1$.

One important motivation for the study of such matrices is their relation to Markov processes:
It is easy to see that for any Markov process $X$ with $n$ possible states, the family
\[
P(s,t) = \left[ p_{i,j}(s,t) \right]_{1 \leq i, j \leq n} ,
\qquad
0 \leq s \leq t < +\infty,
\]
where $p_{i,j}(s,t)$ is the probability of $X_t = j$, conditional on $X_s=i$, is a family of transition matrices such that
\begin{equation}
\label{Eq semigroup property}
P_{s,t} = P_{u,t}P_{s,u},
\qquad
\forall 0 \leq s \leq u \leq t < +\infty .
\end{equation}
Conversely, the Kolmogorov extension theorem (see e.g. \cite{Cinlar11}, Theorem 
\linebreak 
IV.4.18), states that for every family $\left\{ P(s,t) \in \mathcal S_0^+(n, \mathbb R) \right\}_{0 \leq s \leq t < +\infty}$ satisfying \eqref{Eq semigroup property}, there exists a Markov process $X$  such that $p_{i,j}(s,t) = \linebreak \Pr \left\{ \left. X_t=j \right| X_s=i \right\}$ for every $i,j \leq n$ and every $0 \leq s \leq t <+\infty$.

Let $\mathcal S^+(n , \mathbb R)$ denote the space of  \emph{nonsingular} transition matrices.
It is clear that $\mathcal S_0^+(n , \mathbb R)$ is a semigroup with respect to matrix multiplication, and $\mathcal S^+ (n , \mathbb R)$ is a subsemigroup.
However, $\mathcal S^+ (n , \mathbb R)$ is \emph{not} a group, since the inverse of a transition matrix is not, in general, a transition matrix.

The smallest group containing $\mathcal S^+(n, \mathbb R)$ is denoted by $\mathcal S(n, \mathbb R)$.
Due to the considerations above, this is called the \emph{stochastic group} \cite{Poole}.
It can be shown that
\[
\mathcal S(n,\mathbb R) = \left\{
P \in \mathbb R^{n \times n} : \mathrm{Det}(P) \neq 0, P \mathbf 1 = \mathbf 1
\right\},
\]
where $\mathbf 1$ is the $n$-dimensional vector with all entries equal to $1$.
It follows that $\mathcal S (n ,\mathbb R)$, provided with the topology inherited from the usual topology of $\mathbb R^{n \times n}$, is a $n\times (n-1)$ dimensional analytic Lie group.

The Lie algebra of $\mathcal S(n, \mathbb R)$ is called  \emph{stochastic Lie algebra}, and  is denoted by $\mathfrak{s} (n, \mathbb R)$.
Notice that $\mathfrak{s} (n, \mathbb R)$ is isomorphic to the tangent space of $\mathcal S(n, \mathbb R)$ at the identity
\[
\mathfrak{s} (n, \mathbb R) \sim
T_{Id}\mathcal S(n, \mathbb R) =
\left\{
A \in \mathbb R^{n \times n} : A \mathbf 1 = 0
\right\},
\]
$\mathfrak{s} (n, \mathbb R)$ is provided with the matrix commutator $[A,B] = AB - BA$.

We introduce the subset
\[
\mathfrak{s}^+ (n, \mathbb R) =
\left\{ A \in \mathfrak{s} (n, \mathbb R): a_{i,j} \geq 0, \ \forall i \neq j \right\}.
\]
It is clear that $\mathfrak{s}^+ (n, \mathbb R) $ is not a subalgebra of $\mathfrak{s} (n, \mathbb R) $, but it is a convex cone with nonempty interior in $\mathfrak{s} (n, \mathbb R)$.
Since
$\mathcal S^+ (n, \mathbb R)$ is invariant under the flow by ODE's of type
\[
\dot P_t = P_t A,
\]
with $A \in \mathfrak{s}^+(n, \mathbb R)$, it follows that $\mathcal S^+ (n, \mathbb R)$  has nonempty interior in $\mathcal S (n, \mathbb R)$.

In \cite{BoukasFeinsilverFellouris15}, it is stated that the Levi decomposition 
\begin{equation}\label{Eq splitting}
  \mathfrak{s} (n,\mathbb{R}) = {\mathfrak l} \oplus {\mathfrak r},
\end{equation}
has the following components:
\begin{itemize}
\item[a)]
The radical
$\mathfrak{r}$ is the linear subspace generated by the matrices
\begin{equation}\label{generators Boukas}
\hat R_i= E_i(n)-E_n(n), \quad i=1, \ldots , n-1, \qquad
\hat Z=Id -\frac{1}{n}J_n,
\end{equation}
where $E_i(n)$ are the matrices with the elements in the $i$-th column equal to $1$ and all other elements equal to zero, $J_n$ is the matrix with all elements equal to $1$;
\item[b)]
The Levi subalgebra
$\mathfrak{l}$ is the linear subspace of real traceless matrices with all row and column sums equal to zero.
\end{itemize}

The result is correct but the respective proof of \cite[Proposition 3.3]{BoukasFeinsilverFellouris15} seems to contain a logical gap in what regards the semisimplicity of ${\mathfrak l}$ and the maximality of  ${\mathfrak r}$.

In what  follows, we present an orthonormal basis for $\mathfrak{s}(n, \mathbb R)$ which has interesting properties with respect to the Lie algebraic structure of $\mathfrak{s}(n, \mathbb R)$.
In particular, it allows for the explicit computation of the Killing form and therefore we prove semisimplicity of $\mathfrak{l}$ by application of Cartan criterion.
We also obtain the Dynkin diagram of $\mathfrak l$, showing that it is isomorphic to $\mathfrak{sl}(n-1,\mathbb R)$.

\section{Basis for  the Lie algebra $\mathfrak s(n, \mathbb R)$}
\label{S Basis}

Choose an orthonormal
basis $v_1, \ldots , v_{n-1}$ of the hyperplane
\[
\Pi_n=\{x \in \mathbb{R}^n : \ x_1+ \ldots +x_n=0\} ,
\]
and set $v_0=\frac 1{\sqrt n} (1, \ldots , 1) \in \mathbb{R}^n$.
Recall that for $a, b \in \mathbb{R}^n$, the dyadic product $a \otimes b$ is the matrix:
\[
\left(
    \begin{array}{c}
      a_1 \\
      \vdots \\
      a_n \\
    \end{array}
  \right)  \otimes \left(
                    \begin{array}{ccc}
                      b_1 & \cdots  & b_n \\
                    \end{array}
                  \right)=\left(
                            \begin{array}{ccc}
                              a_1b_1 & \cdots & a_1b_n \\
                              \vdots & & \vdots \\
                              a_nb_1 & \cdots & a_nb_n \\
                            \end{array}
                          \right).
\]
The matrices
\begin{align}
& \label{Eq Def Z}
Z=\frac{1}{\sqrt{n-1}}\left(I_n-v_0 \otimes v_0\right) ,
\\ & \label{Eq Def Ri}
R_i=v_0 \otimes v_i, \qquad i=1, \ldots , n-1
\end{align}
span the same linear subspace as the matrices \eqref{generators Boukas}.

We take the $(n-1)(n-2)$-dimensional  linear subspace
\[
\mathcal A =
\mathrm{span} \left\{ A_{ij}, \ i=1, \ldots n-1, \ j=1, \ldots , n-1, \ i \neq j \right\} ,
\]
spanned by  the rank-$1$ matrices
\begin{equation}\label{Eq Def Aij}
A_{ij}= v_i \otimes v_j.
\end{equation}
Since  $v_i \in \Pi$, there holds $v_0^*(v_i \otimes v_j)=(v_0\cdot v_i)v_j^*=0$.
Similarly, 
\linebreak 
$(v_i \otimes v_j)v_0=0$.
Hence the matrices $A_{ij}$ have zero row and column sums.
Since ${\rm Tr}(v_i \otimes v_j)=v_i \cdot v_j=0$, the matrices $A_{ij}$ are traceless.

Now, consider the linear subspace
\begin{equation}\label{Eq Def space H}
\mathcal{H}=\left\{ H=\sum_{\ell =1}^{n-1}\gamma_{\ell}(v_\ell \otimes v_\ell)\left| \ \sum_{\ell=1}^{n-1}\gamma_\ell=0\right.\right\}.
\end{equation}
The row and column sums of each $(v_\ell \otimes v_\ell)$ are zero, and the trace of
$H \in \mathcal H$ equals $\sum\limits_{\ell =1}^{n-1}\gamma_{\ell}=0$.

We set
\begin{equation}\label{def_ell}
  \mathfrak l = \mathcal A \oplus \mathcal H .
\end{equation}

We introduce a basis of $\mathcal H$:
\begin{equation}
\label{Eq D Hk}
H_k= \sum\limits_{\ell =1}^{n-1}\gamma^k_{\ell}(v_\ell \otimes v_\ell), \qquad k=1, \ldots , (n-2),
\end{equation}
where $\gamma^k=(\gamma^k_1, \ldots , \gamma^k_{n-1}), \ k=1, \ldots , (n-2)$, form an orthonormal basis for the subspace
\[
\Pi_{n-1}= \left\{ x \in \mathbb R^{n-1} : x_1 + \ldots + x_{n-1}=0 \right\}.
\]
Using the definition of dyadic product and elementary properties of the trace, it is straightforward to check that the matrices
\begin{align*}
&
Z, \quad
R_i \ (i=1, \ldots, n-1), 
\\&
A_{ij} \  (i,j=1, \ldots , n-1, i \neq j), \quad
H_i \ (i=1, \ldots, n-2)
\end{align*}
form an orthonormal system with respect to the matrix scalar product
$\langle A, B \rangle = \mathrm{Tr}(AB^*)$.

The following Lemma presents the multiplication table for our basis. Its proof is accomplished by a  direct computation.

\begin{lemma}
\label{L multiplication table}
For meaningful values of the indexes $i,j,k,\ell$ there holds:
\begin{itemize}
\item[]
$[Z,R_i] = \frac{-1}{n-1}R_i $;
\item[]
$[Z,A_{ij}]=0$;
\item[]
$[Z,H_i] =0 $;
\item[]
$[R_i, R_j] = 0 $;
\item[]
$[R_i,A_{j,k}] = \left\{ \begin{array}{ll}
R_k , & \text{if } i=j,
\\
0 , & \text{if } i \neq j ;
\end{array} \right.$
\item[]
$[R_i,H_j] = \gamma_i^j R_i$;
\item[]
$[A_{ij},A_{k\ell}] = \left\{ \begin{array}{ll}
(v_i \otimes v_i)-(v_j \otimes v_j) =
\sum\limits_{r=1}^{n-2} \left( \gamma_i^r - \gamma_j^r \right) H_r , & \text{if } i=\ell,j=k,
\\
A_{i \ell} , & \text{if } i \neq \ell, j=k,
\\
-A_{kj} ,  & \text{if } i=\ell, j \neq k ,
\\
0,  & \text{if } i \neq \ell, j \neq k ;
\end{array} \right. $
\item[]
$[A_{ij}, H_k] = \left( \gamma_j^k-\gamma_i^k \right) A_{ij}$;
\item[]
$[H_i,H_j] = 0 . \ \square$
\end{itemize}
\end{lemma}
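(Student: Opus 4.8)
\emph{Proof plan.} The plan is to derive the whole table from one identity for dyads,
\[
(a\otimes b)(c\otimes d)=(b\cdot c)\,(a\otimes d),
\]
together with the orthonormality relations $v_a\cdot v_b=\delta_{ab}$ (Kronecker delta) for $a,b\in\{0,1,\dots,n-1\}$. First I would put every basis matrix into dyadic form: $R_i=v_0\otimes v_i$, $A_{ij}=v_i\otimes v_j$ and $H_k=\sum_{\ell=1}^{n-1}\gamma^k_\ell\,(v_\ell\otimes v_\ell)$ already are of this form, while for $Z$ one inserts the resolution of the identity $I_n=\sum_{a=0}^{n-1}v_a\otimes v_a$ to get $Z=\frac{1}{\sqrt{n-1}}\sum_{a=1}^{n-1}v_a\otimes v_a$, which is the form best suited to the calculation.

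Next I would run through each of the pairs appearing in the table, expanding $XY$ and $YX$ by bilinearity, applying the dyadic rule termwise, and letting the factors $v_a\cdot v_b$ collapse the sums. For example $R_iA_{jk}=(v_i\cdot v_j)(v_0\otimes v_k)=\delta_{ij}R_k$ while $A_{jk}R_i=(v_k\cdot v_0)(v_j\otimes v_i)=0$; for the $A$--$A$ bracket one gets $A_{ij}A_{k\ell}=\delta_{jk}(v_i\otimes v_\ell)$ and $A_{k\ell}A_{ij}=\delta_{i\ell}(v_k\otimes v_j)$, whence $[A_{ij},A_{k\ell}]=\delta_{jk}(v_i\otimes v_\ell)-\delta_{i\ell}(v_k\otimes v_j)$, and the four cases of the lemma are read off according to whether $j=k$ and whether $i=\ell$; the brackets involving $Z$, those among the $R$'s, and those among the $H$'s are equally immediate. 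In every case both products land visibly in the linear span of the basis, so the computation also confirms closure of $\mathfrak s(n,\mathbb R)$ under the bracket.

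The only point that needs a word is the case $i=\ell,\ j=k$ of the $A$--$A$ bracket, where the formula gives $v_i\otimes v_i-v_j\otimes v_j$: this matrix is traceless with vanishing row and column sums, hence lies in $\mathcal H$, yet it is not one of the chosen basis matrices, so it must be re-expanded in $H_1,\dots,H_{n-2}$. To do this, write $e_i$ for the $i$-th standard basis vector of $\mathbb R^{n-1}$; since $e_i-e_j\in\Pi_{n-1}=\mathrm{span}\{\gamma^1,\dots,\gamma^{n-2}\}$ one has $e_i-e_j=\sum_{r=1}^{n-2}(\gamma^r_i-\gamma^r_j)\gamma^r$, and substituting this into $\sum_{\ell=1}^{n-1}(e_i-e_j)_\ell\,(v_\ell\otimes v_\ell)$ and comparing with \eqref{Eq D Hk} yields $v_i\otimes v_i-v_j\otimes v_j=\sum_{r=1}^{n-2}(\gamma^r_i-\gamma^r_j)H_r$. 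I do not expect any genuine obstacle here: the argument is pure bookkeeping, the one thing to keep track of being the index ranges, so that, e.g., the matrix $A_{i\ell}$ arising in $[A_{ij},A_{j\ell}]$ really is a basis element (that case being taken under the hypothesis $i\neq\ell$). The lemma then follows directly from the two displayed facts stated at the start.
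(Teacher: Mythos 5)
Your proposal is correct and takes essentially the same route as the paper, whose entire proof is the remark that the table ``is accomplished by a direct computation'': your dyad identity $(a\otimes b)(c\otimes d)=(b\cdot c)\,a\otimes d$ together with the $\Gamma\Gamma^{*}$-projection argument for the case $i=\ell,\ j=k$ is exactly that computation, spelled out. One remark: if you actually carry out the $Z$-brackets you call ``immediate,'' your method gives $[Z,R_i]=-\tfrac{1}{\sqrt{n-1}}R_i$ (consistent with $Zv_i=\tfrac{1}{\sqrt{n-1}}v_i$ used later in the paper), so the coefficient $\tfrac{-1}{n-1}$ in the lemma's statement is a typo rather than a defect of your argument.
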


\begin{remark}
\label{Rmk properties of AH}
Lemma \ref{L multiplication table} shows that the orthogonal subspaces $\mathcal A$, $\mathcal H$ possess remarkable properties:
\begin{itemize}
\item[1.]
$\mathcal{H}$ is a Cartan subalgebra of $\mathfrak{l}$.

\item[2.]
$[\mathcal{H},\mathcal{A}] \subset \mathcal{A}$. The adjoint action of $\mathcal{H}$ on $\mathcal{A}$ is diagonal, for $H \in \mathcal H$:
\[
{\rm ad} H A_{ij}=(\gamma_{i}-\gamma_{j})A_{ij}.
\]

\item[3.]
$ [A_{ij}, A_{ji}]=v_i \otimes v_i - v_j \otimes v_j=H_{ij} \in \mathcal{H}$.

\item[4.]
For $i \neq j$, $\{A_{ij}, A_{ji}, [ A_{ij}, A_{ji} ] \}$ spans a $3$-dimensional Lie subalgebra:
\[
\left[ H_{ij}, A_{ij} \right] = 2A_{ij}, \quad
\left[ H_{ij}, A_{ji} \right] = -2A_{ji}.
\]

\item[5.]
For any $(ij),(k\ell)$ the commutator $[A_{ij}, A_{k\ell}]=\mbox{\rm ad}A_{ij} A_{k\ell}$ is orthogonal to
$A_{k\ell}$ with respect to the matrix scalar product.
$\square$
\end{itemize}
\end{remark}

\section{Semisimplicity of $\mathfrak l$}

In this section, we prove semisimplicity of $\mathfrak l$ by direct computation of the Killing form $\mathfrak B$.

\begin{proposition}
\label{P Killing}
The Killing form $\mathfrak B$ satisfies:
\begin{itemize}
\item[i)]
$\mathfrak{B}(\mathcal{A},\mathcal{H})=0$,
\item[ii)]
$\mathfrak{B}(H_i,H_j)=2(n-1)\langle H_i, H_j \rangle $, for $i,j=1, \ldots, n-2$,
\item[iii)]
$ \mathfrak B (A_{ij},A_{k\ell} ) = \left\{ \begin{array}{ll}
0, & \text{if } (i,j) \neq (\ell, k) ,
\smallskip \\
2(n-1), & \text{if } (i,j)= (\ell, k) . \ \square
\end{array} \right. $
\end{itemize}
\end{proposition}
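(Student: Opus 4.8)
The plan is to compute the Killing form $\mathfrak B(X,Y) = \mathrm{Tr}(\mathrm{ad}\,X \circ \mathrm{ad}\,Y)$ for $X,Y$ basis elements of $\mathfrak l = \mathcal A \oplus \mathcal H$, using the multiplication table of Lemma \ref{L multiplication table} to read off the matrix of each $\mathrm{ad}$ operator in the orthonormal basis $\{A_{ij}, H_k\}$. Throughout I will use that the basis is orthonormal with respect to $\langle\cdot,\cdot\rangle$, so the $(m,m)$ diagonal entry of $\mathrm{ad}\,X\circ\mathrm{ad}\,Y$ in a basis element $e_m$ is just $\langle \mathrm{ad}\,X(\mathrm{ad}\,Y(e_m)), e_m\rangle$, and the trace is the sum of these over all basis elements.

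For part (i), I would observe that $\mathrm{ad}\,H$ for $H\in\mathcal H$ preserves both $\mathcal A$ and $\mathcal H$ (indeed it kills $\mathcal H$ and acts diagonally on $\mathcal A$ by Remark \ref{Rmk properties of AH}), while $\mathrm{ad}\,A_{ij}$ sends $\mathcal H$ into $\mathrm{span}(A_{ij})\subset\mathcal A$ and sends each $A_{k\ell}$ either to an element of $\mathcal H$ (only when $(k,\ell)=(j,i)$) or to a multiple of some $A_{pq}$ or to $0$. Composing, $\mathrm{ad}\,A_{ij}\circ\mathrm{ad}\,H$ maps $\mathcal H\to \mathcal H\to 0$ and $\mathcal A\to\mathcal A\to$ (a mix), and one checks the diagonal entries all vanish: on $H_k$ the image lies in $\mathrm{span}(A_{ij})$, which is orthogonal to $H_k$; on $A_{k\ell}$ one gets $(\gamma^\bullet)\cdot[A_{ij},A_{k\ell}]$, and by Remark \ref{Rmk properties of AH}(5) this commutator is orthogonal to $A_{k\ell}$. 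Hence the trace is zero.

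For part (ii), since $\mathrm{ad}\,H_i\circ\mathrm{ad}\,H_j$ annihilates all of $\mathcal H$, only the $\mathcal A$-part contributes; on $A_{k\ell}$ it acts by the scalar $(\gamma^i_k-\gamma^i_\ell)(\gamma^j_k-\gamma^j_\ell)$, so $\mathfrak B(H_i,H_j)=\sum_{k\neq\ell}(\gamma^i_k-\gamma^i_\ell)(\gamma^j_k-\gamma^j_\ell)$, the sum over ordered pairs of distinct indices in $\{1,\dots,n-1\}$. Expanding and using that $\sum_k\gamma^i_k=0$ and $\sum_k\gamma^i_k\gamma^j_k=\langle\gamma^i,\gamma^j\rangle=\langle H_i,H_j\rangle$, the cross terms telescope and one is left with $2(n-1)\langle H_i,H_j\rangle$ after the standard bookkeeping. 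For part (iii), the key point is that $\mathrm{ad}\,A_{ij}$ shifts the index pair, so $\mathrm{ad}\,A_{ij}\circ\mathrm{ad}\,A_{k\ell}$ can have a nonzero diagonal entry on a basis vector only if applying $\mathrm{ad}\,A_{k\ell}$ then $\mathrm{ad}\,A_{ij}$ returns to that same vector — a returning condition that forces $(i,j)=(\ell,k)$ (the orthogonality in Remark \ref{Rmk properties of AH}(5) rules out the would-be diagonal contributions in all other cases). When $(i,j)=(\ell,k)$, I would tally: the contribution from the $\mathcal H$-slots, where $\mathrm{ad}\,A_{ij}\,H_r=(\gamma^r_j-\gamma^r_i)A_{ij}$ and then $\mathrm{ad}\,A_{ji}\,A_{ij}=H_{ij}=\sum_r(\gamma^r_i-\gamma^r_j)H_r$, gives $\sum_r(\gamma^r_i-\gamma^r_j)^2 = |v_i\otimes v_i - v_j\otimes v_j|^2 = 2$; plus the contributions from the $\mathcal A$-slots $A_{ip}$ and $A_{pj}$ which each contribute $\pm1$ via the $A_{i\ell}/-A_{kj}$ rules, there being $n-2$ of the first type and $n-2$ of the second, giving $2(n-2)$; total $2 + 2(n-2) = 2(n-1)$.

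The main obstacle is the careful case analysis in (iii): one must enumerate exactly which basis vectors $e_m$ survive under the two-step composition $\mathrm{ad}\,A_{ij}\circ\mathrm{ad}\,A_{k\ell}$ and verify the returning index conditions, being especially careful with the $3$-dimensional $\mathfrak{sl}_2$-subalgebras of Remark \ref{Rmk properties of AH}(4) where both $\mathcal A$- and $\mathcal H$-terms appear, and with the degenerate overlaps of indices (e.g. when some of $i,j,k,\ell$ coincide). Parts (i) and (ii) are comparatively routine once the multiplication table is in hand; the orthonormality of the basis and the orthogonality property in Remark \ref{Rmk properties of AH}(5) are what make all the off-diagonal complications irrelevant for the trace.
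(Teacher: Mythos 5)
Your proposal is correct and takes essentially the same approach as the paper: both compute $\mathrm{Tr}(\mathrm{ad}\,X\,\mathrm{ad}\,Y)$ by summing diagonal entries over the orthonormal basis $\{A_{ij},H_k\}$ using Lemma \ref{L multiplication table}, splitting each trace into its $\mathcal H$- and $\mathcal A$-restrictions and invoking Remark \ref{Rmk properties of AH}(5) for part (i), with only cosmetic differences (your index ``returning'' case analysis for the vanishing in (iii) and the identity $\sum_r(\gamma_i^r-\gamma_j^r)^2=\|v_i\otimes v_i-v_j\otimes v_j\|^2=2$ replace the paper's explicit dyadic-product formula and its $\Gamma\Gamma^*$ projection argument, yielding the same values $2$ and $2(n-2)$). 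Two harmless slips: $\mathrm{ad}\,A_{ji}A_{ij}=-H_{ij}$ rather than $H_{ij}$ (the sign cancels against the factor from $\mathrm{ad}\,A_{ij}H_r$, so the contribution is indeed $+2$), and the surviving $\mathcal A$-diagonal entries in (iii) are all $+1$, not $\pm1$ (the two minus signs from the $-A_{kj}$ rule compensate), consistent with your total $2(n-2)$.
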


According to Cartan criterion for semisimplicity, we get

\begin{corollary}
The Killing form $\mathfrak{B}$ is non-degenerate and the algebra ${\mathfrak l}$ is semisimple. $\square$
\end{corollary}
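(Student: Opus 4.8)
The plan is to invoke Cartan's criterion for semisimplicity: a finite-dimensional Lie algebra over a field of characteristic zero is semisimple if and only if its Killing form is non-degenerate. Hence it suffices to show that $\mathfrak{B}$ is non-degenerate on $\mathfrak{l}$, and for this I would simply inspect the Gram matrix of $\mathfrak{B}$ in the orthonormal basis $\{A_{ij}\}_{i\neq j}\cup\{H_k\}$ supplied by Proposition \ref{P Killing}.

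By part (i) of Proposition \ref{P Killing} this Gram matrix is block-diagonal with respect to the splitting $\mathfrak{l}=\mathcal{A}\oplus\mathcal{H}$, so it is enough to check that $\mathfrak{B}$ restricts to a non-degenerate form on each summand. On $\mathcal{H}$, part (ii) together with the orthonormality of $H_1,\ldots,H_{n-2}$ shows that the matrix of $\mathfrak{B}|_{\mathcal{H}}$ equals $2(n-1)\,I_{n-2}$, which is invertible for $n\geq 3$ (for $n=2$ one has $\mathfrak{l}=0$ and there is nothing to prove). On $\mathcal{A}$, part (iii) says that if one orders the basis so that each $A_{ij}$ is immediately followed by $A_{ji}$, the matrix of $\mathfrak{B}|_{\mathcal{A}}$ becomes a direct sum of $2\times 2$ blocks $\left(\begin{smallmatrix}0&2(n-1)\\2(n-1)&0\end{smallmatrix}\right)$, each of determinant $-4(n-1)^2\neq 0$; hence $\mathfrak{B}|_{\mathcal{A}}$ is non-degenerate. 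Consequently $\mathfrak{B}$ is non-degenerate on all of $\mathfrak{l}$.

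There is no genuine obstacle once Proposition \ref{P Killing} is available: the whole argument reduces to the elementary observation that the pairing structure in (iii) block-diagonalizes the Gram matrix into invertible hyperbolic $2\times 2$ blocks, while (i) and (ii) handle the remaining part. With non-degeneracy in hand, Cartan's criterion immediately yields that $\mathfrak{l}$ is semisimple.
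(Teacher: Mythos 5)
Your proof is correct and follows essentially the same route as the paper: the paper deduces the corollary directly from Proposition \ref{P Killing} via Cartan's criterion, leaving the non-degeneracy check implicit. You merely spell out that check explicitly (the block-diagonal splitting $\mathcal A \oplus \mathcal H$, the scalar matrix $2(n-1)I$ on $\mathcal H$, and the invertible hyperbolic $2\times 2$ blocks pairing $A_{ij}$ with $A_{ji}$), which is exactly the intended verification.
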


\begin{proof}[Proof of Proposition \ref{P Killing}]
{\it (i)}
Take $A_{ij}$, $H_k$ from the basis of $\mathcal A$ and $\mathcal H$, respectively.

Since $\mathcal H$ is Abelian, $\left( \mathrm{ad} A_{ij} \mathrm{ad} H_k \right)_{\mathcal H} =0$.

Due to Lemma \ref{L multiplication table}, for any $A_{\ell m}$, $\mathrm{ad} A_{ij} \mathrm{ad} H_k A_{\ell m} = C \mathrm{ad} A_{ij} A_{\ell m}$.
By property 5 in Remark \ref{Rmk properties of AH}, the last matrix is orthogonal to $A_{\ell m}$ and therefore  the trace of the restriction $\left.(\mathrm{ad} A_{i,j} \mathrm{ad} H_k)\right|_{\mathcal{A}}$ is null, and we can conclude that $\mathfrak{B}(\mathcal{A},\mathcal{H})=0$.
\smallskip

{\it (ii)}
Choose $H_k, H_\ell \in \mathcal H$.
 As far as $\left.(\mathrm{ad} H_k \mathrm{ad} H_\ell)\right|_{\mathcal{H}}=0$, we only need to  compute the trace of  $\left.(\mathrm{ad} H_k \mathrm{ad} H_\ell)\right|_{\mathcal{A}}$.

By Lemma \ref{L multiplication table}, $\mathrm{ad} H_k \mathrm{ad} H_\ell A_{ij } = \mathrm{ad} H_k (\gamma_i^\ell - \gamma_j^\ell )A_{ij} = (\gamma_i^\ell - \gamma_j^\ell)(\gamma_i^k - \gamma_j^k) A_{ij}$.
Hence,
\begin{align*}
&
\mathfrak B( H_k, H_\ell) = 
\sum_{i,j}(\gamma_i^\ell - \gamma_j^\ell)(\gamma_i^k - \gamma_j^k) =
\\ = &
(n-1)\sum_i \gamma_i^\ell\gamma_i^k -
\sum_i \gamma_i^\ell \sum_j \gamma_j^k -
\sum_j \gamma_j^\ell \sum_i \gamma_i^k +
(n-1) \sum_j \gamma_j^\ell \gamma_j^k .
\end{align*}
Since $\sum\limits_i \gamma_i^k =0$, it follows that
\[
\mathfrak B( H_k, H_\ell) = 2 (n-1) \sum\limits_i \gamma_i^\ell\gamma_i^k = 2(n-1) \langle H_k , H_\ell \rangle .
\]

{\it (iii)}
Pick  $A_{ij},A_{k\ell} $. For every    $H_m $  
\begin{equation}
\label{AAh}
\mathrm{ad} A_{ij}\mathrm{ad} A_{k\ell} H_m =
\mathrm{ad} A_{ij} (\gamma_\ell^m-\gamma_k^m) A_{k\ell} ,
\end{equation}
lies in $\mathcal A$ whenever $(k,\ell) \neq (j,i)$.
This implies
\[
{\rm Tr}\left.(\mathrm{ad} A_{ij} \mathrm{ad} A_{k\ell})\right|_{\mathcal{H}}=0,  \ \mbox{for} \ (k,\ell) \neq (j,i).
\]
To compute ${\rm Tr}\left.(\mathrm{ad} A_{ij} \mathrm{ad} A_{k\ell})\right|_{\mathcal{A}}$, notice that
\begin{align*}
&
\langle A_{\alpha\beta}, \mathrm{ad} A_{ij} \mathrm{ad} A_{k\ell} A_{\alpha\beta} \rangle =
v_\alpha^* \left( A_{ij} \mathrm{ad} A_{k\ell} A_{\alpha\beta} - (\mathrm{ad} A_{k\ell} A_{\alpha\beta})A_{ij} \right) v_\beta =
\\ = &
(v_\alpha \cdot v_i) v_j^* \left( A_{k\ell}A_{\alpha\beta} - A_{\alpha\beta}A_{ij} \right) v_\beta -
(v_\beta \cdot v_j) v_\alpha^* \left(A_{k\ell}A_{\alpha\beta}- A_{\alpha\beta}A_{k\ell} \right) v_i.
\end{align*}
Since $i \neq j$ and $k \neq \ell$, $v_j^*  A_{\alpha\beta}A_{ij} v_\beta = v_\alpha^* A_{k\ell}A_{\alpha\beta} v_i =0$, and therefore
\begin{align}
\notag &
\langle A_{\alpha\beta}, \mathrm{ad} A_{ij} \mathrm{ad} A_{k\ell} A_{\alpha\beta} \rangle =
\\ = &  \label{AAAA}
(v_j \cdot v_k)(v_i \cdot v_\alpha)(v_\ell \cdot v_\alpha) +
(v_i \cdot v_\ell)(v_j \cdot v_\beta)(v_k \cdot v_\beta ),
\end{align}
which is zero whenever $(k,\ell) \neq (j,i)$.

For $(k,\ell) = (j,i)$, the equality
\eqref{AAh} and Lemma \ref{L multiplication table} yield
\begin{align*}
\langle H_m , \mathrm{ad} A_{ij} \mathrm{ad} A_{ji} H_m \rangle = &
(\gamma_i^m-\gamma_j^m) \langle H_m , \mathrm{ad} A_{ij}A_{ji} \rangle =
\\ = &
(\gamma_i^m-\gamma_j^m) \langle H_m , v_i \otimes v_i - v_j \otimes v_j \rangle =
(\gamma_i^m-\gamma_j^m) ^2 ,
\end{align*}
and ${\rm Tr}\left.\left( \mathrm{ad} A_{ij} \mathrm{ad} A_{ji} \right)\right|_{\mathcal H} =
\sum\limits_{m=1}^{n-2}(\gamma_i^m - \gamma_j^m)^2$.

To compute the last expression, let us form the matrix
\begin{equation}
\label{Eq matrix Gamma}
\Gamma = \left( \begin{array}{ccc}
\gamma_1^1 & \cdots & \gamma_1^{n-2} \\
\vdots & & \vdots \\
\gamma_{n-1}^1 & \cdots & \gamma_{n-1}^{n-2}
\end{array}\right) .
\end{equation}
Then
$\Gamma \Gamma^*$ is the matrix of the orthogonal projection of $\mathbb R^{n-1}$ onto the subspace $\Pi_{n-1}$.
Take a standard basis $e_1, \ldots , e_{n-1}$ in $\mathbb{R}^{n-1}$, and note that $e_i-e_j \in \Pi_{n-1}$.
Then
\begin{eqnarray*}
{\rm Tr}\left.\left( \mathrm{ad} A_{ij} \mathrm{ad} A_{ji} \right)\right|_{\mathcal H} &=& \sum_{m=1}^{n-2}(\gamma_i^m - \gamma_j^m)^2 = \\
 =(e_i - e_j)^* \Gamma \Gamma^* (e_i-e_j)& =&
(e_i-e_j)^* (e_i-e_j) = 2.
\end{eqnarray*}

In what regards ${\rm Tr}\left.\left( \mathrm{ad} A_{ij} \mathrm{ad} A_{ji} \right)\right|_{\mathcal A} $, then by \eqref{AAAA}:
\[
\langle A_{\alpha\beta} , \mathrm{ad} A_{ij} \mathrm{ad} A_{ji} A_{\alpha\beta} \rangle =
(v_i \cdot v_\alpha ) + (v_j \cdot v_\beta ) .
\]
Hence,
\begin{align*}
{\rm Tr}\left.\left( \mathrm{ad} A_{ij} \mathrm{ad} A_{ji} \right)\right|_{\mathcal A} = &
\sum_{\begin{array}{c}
\alpha, \beta \leq n-1 \\ \alpha \neq \beta
\end{array}
} \left( (v_i \cdot v_\alpha ) + (v_j \cdot v_\beta ) \right) = 2(n-2) ,
\end{align*}
and therefore ${\rm Tr} \left( \mathrm{ad} A_{ij} \mathrm{ad} A_{ji} \right) = 2(n-1)$.
\end{proof}

\section{Classification of the Levi subalgebra $\mathfrak l $}

Now we wish to prove the following result concerning the type of the semisimple subalgebra $\mathfrak l $.

\begin{theorem}
\label{T classification}
The Levi subalgebra $\mathfrak l$ is isomorphic to the special linear Lie algebra $\mathfrak{sl}(n-1, \mathbb R)$.
$\square$
\end{theorem}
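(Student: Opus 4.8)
The cleanest route, in my view, is to realize $\mathfrak l$ directly as a matrix conjugate of $\mathfrak{sl}(n-1,\mathbb R)$. Let $P=[\,v_1\,|\,\cdots\,|\,v_{n-1}\,]$ be the $n\times(n-1)$ matrix whose columns are the chosen orthonormal basis of $\Pi_n$. Since the $v_i$ are orthonormal, $P^*P=I_{n-1}$ (while $PP^*$ is the orthogonal projection of $\mathbb R^n$ onto $\Pi_n$). Writing $E_{ij}$ for the $(n-1)\times(n-1)$ matrix with $1$ in position $(i,j)$ and zeros elsewhere, we have $v_i\otimes v_j=v_iv_j^*=PE_{ij}P^*$, so from \eqref{Eq Def Aij}, \eqref{Eq Def space H} and \eqref{def_ell},
\[
\mathfrak l=\mathcal A\oplus\mathcal H=\bigl\{\,PCP^*:\ C\in\mathbb R^{(n-1)\times(n-1)},\ \mathrm{Tr}(C)=0\,\bigr\}=\bigl\{\,PCP^*:\ C\in\mathfrak{sl}(n-1,\mathbb R)\,\bigr\},
\]
the middle equality using $\mathrm{Tr}(PCP^*)=\mathrm{Tr}(P^*PC)=\mathrm{Tr}(C)$. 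Hence the linear map $\phi\colon\mathfrak{sl}(n-1,\mathbb R)\to\mathfrak l$, $\phi(C)=PCP^*$, is onto $\mathfrak l$, and it is injective because $P^*(PCP^*)P=(P^*P)C(P^*P)=C$, so $\phi(C)=0$ forces $C=0$.

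It remains to check that $\phi$ is a Lie algebra homomorphism, and here the identity $P^*P=I_{n-1}$ does all the work: $\phi(C)\phi(D)=PCP^*PDP^*=P(CD)P^*=\phi(CD)$, whence $[\phi(C),\phi(D)]=\phi(C)\phi(D)-\phi(D)\phi(C)=\phi([C,D])$. Thus $\phi$ is a Lie algebra isomorphism, which proves the theorem. As a consistency check, pushing the $E_{ij}$ and the traceless diagonal matrices through $\phi$ reproduces exactly the brackets of Lemma \ref{L multiplication table} (with $A_{ij}=\phi(E_{ij})$ and $H_{ij}=\phi(E_{ii}-E_{jj})$); moreover $\langle\phi(C),\phi(D)\rangle=\mathrm{Tr}(PCD^*P^*)=\mathrm{Tr}(CD^*)=\langle C,D\rangle$, so $\phi$ is also an isometry, consistent with the orthonormality of the basis and with the Killing form of Proposition \ref{P Killing}.

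An alternative, more intrinsic argument — the one foreshadowed in the Introduction — goes through the root system. By Remark \ref{Rmk properties of AH}, $\mathcal H$ is a Cartan subalgebra of the semisimple algebra $\mathfrak l$, the matrices $A_{ij}$ ($i\neq j$, $i,j\le n-1$) span the root spaces, and the root of $A_{ij}$ is the functional $H\mapsto\gamma_i-\gamma_j$ on $\mathcal H$. Identifying $\mathcal H$ with $\Pi_{n-1}\subset\mathbb R^{n-1}$ via the scalar product, these roots become $\{e_i-e_j:\ i\neq j\}$, the standard root system of type $A_{n-2}$; choosing the ordering with simple roots $e_i-e_{i+1}$ ($i=1,\dots,n-2$) and reading off the Cartan integers from Proposition \ref{P Killing} (or directly from $\langle\cdot,\cdot\rangle$) yields the $A_{n-2}$ Dynkin diagram, so $\mathfrak l\otimes\mathbb C\cong\mathfrak{sl}(n-1,\mathbb C)$. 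The one point that needs genuine care is that the Dynkin diagram pins down only the complexification, so one must still identify which real form $\mathfrak l$ is; this is settled by noting that $\mathcal H$ is a split (maximally noncompact) Cartan subalgebra — all roots are real-valued on it and $\mathrm{ad}\,H$ is $\mathbb R$-diagonalizable for every $H\in\mathcal H$ — which forces $\mathfrak l$ to be the split real form $\mathfrak{sl}(n-1,\mathbb R)$. I expect that real-form step to be the only actual obstacle along this route; everything else is bookkeeping with Lemma \ref{L multiplication table}, and in any event the explicit isomorphism $\phi$ of the first argument bypasses the issue altogether.
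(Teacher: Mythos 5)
Your first argument is correct and it takes a genuinely different route from the paper. You realize $\mathfrak l$ outright as $P\,\mathfrak{sl}(n-1,\mathbb R)\,P^*$ with $P=\left(\begin{array}{c|c|c} v_1 & \cdots & v_{n-1}\end{array}\right)$, and the single identity $P^*P=I_{n-1}$ makes $C\mapsto PCP^*$ an injective, surjective Lie (indeed associative) homomorphism onto $\mathcal A\oplus\mathcal H$. The paper instead proves the theorem by root-system classification: it uses the Cartan subalgebra $\mathcal H$, reads the roots $\alpha_{ij}(H_k)=\gamma_i^k-\gamma_j^k$ off Lemma \ref{L multiplication table}, identifies the root system with $\{e_i-e_j\}$ on $\Pi_{n-1}$ via the Killing form of Proposition \ref{P Killing}, and obtains the Dynkin diagram $A_{n-2}$. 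Your conjugation argument is essentially the computation the paper performs only afterwards, in Section \ref{S representation l} ($M_1^*A_{ij}M_1=e_i\otimes e_j$, $M_1^*H_iM_1=\mathrm{diag}(\gamma^i)$, with $M_1=P$), but promoted to the proof itself; it is more elementary, needs neither the semisimplicity of $\mathfrak l$ nor any structure theory (semisimplicity even falls out as a corollary, since $\mathfrak{sl}(n-1,\mathbb R)$ is semisimple), and it sidesteps the real-form issue entirely. What the paper's route buys is the explicit root-space decomposition and Dynkin diagram of $\mathfrak l$, data of independent interest that your isomorphism does not directly display. Your caveat about the second, root-theoretic argument is also well taken: the Dynkin diagram alone determines only $\mathfrak l\otimes\mathbb C$, and the conclusion $\mathfrak l\cong\mathfrak{sl}(n-1,\mathbb R)$ requires observing that $\mathcal H$ is a split Cartan subalgebra (all roots real-valued, $\mathrm{ad}\,H$ diagonalizable over $\mathbb R$); the paper leaves this implicit with a citation, whereas you address it explicitly — and your first argument makes the point moot.
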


\begin{proof}
As stated in Remark \ref{Rmk properties of AH}, $\mathcal H$ is a Cartan subalgebra of $\mathfrak l$.
From Lemma \ref{L multiplication table}, we see that the nonzero characteristic functions of $\mathfrak l$ with respect to $\mathcal H$ are the linear functionals $\alpha_{ij}: \mathcal H \mapsto \mathbb R$ such that
\begin{equation}
\notag
\alpha_{ij} (H_k) = \gamma^k_i-\gamma^k_j , \qquad \text{for } 1 \leq k \leq n-2, \ \ 1 \leq i , j \leq n-1, \ \ i \neq j ,
\end{equation}
and the corresponding characteristic spaces are
\begin{equation}
\notag
\mathcal A_{ij} = \left\{ t A_{ij}: t \in \mathbb R \right\} \qquad 1 \leq i , j \leq n-1, \ \ i \neq j .
\end{equation}
Thus, $\mathfrak l$ is split as
\begin{equation}
\notag
\mathfrak l = \mathcal H \oplus \bigoplus_{i \neq j} \mathcal A_{ij} .
\end{equation}
 Hence the set, $\mathcal R = \left\{ \alpha_{ij}: 1 \leq i \leq n-1, \ 1 \leq j \leq n-1, \ i \neq j\right\}$ is a root system of $\mathfrak l$.

Since the Killing form restricted to $\mathcal H$ is diagonal, the dual space $\mathcal H^*$ is provided with the inner product uniquely defined by
\begin{equation}
\notag
\left\langle \alpha_{ij}, \alpha_{\ell,m} \right\rangle =
\sum_{k=1}^{n-2} \left( \gamma_i^k - \gamma_j^k \right)\left( \gamma_\ell^k - \gamma_m^k \right) =
(e_i-e_j)^*(e_\ell - e_m)
\end{equation}
for every $\alpha_{ij},\alpha_{\ell m} \in \mathcal R$.
Thus, $\mathcal R$ is isomorphic to the root system
\begin{equation}
\notag
\mathcal E = \left\{ e_i-e_j : 1 \leq i \leq n-1, \ 1 \leq j \leq n-1, \ i \neq j \right\}
\end{equation}
on the hyperplane $\Pi_{n-1}$.
Since
\begin{equation}
\notag
e_\ell - e_m = \left\{ \begin{array}{ll}
\sum\limits_{i=\ell}^{m-1} \left( e_i - e_{i+1} \right), & \text{if } \ell < m ,
\\
\sum\limits_{i=m}^{\ell -1} -\left( e_i - e_{i+1} \right), & \text{if } \ell > m ,
\end{array} \right.
\end{equation}
it follows that the set
$
\Delta = \left\{ \alpha_{12}, \alpha_{23} ,\alpha_{34}, \ldots , \alpha_{(n-2)(n-1)} \right\}
$
is a system of positive simple roots.
Further,
\begin{align*}
&
\left\langle \alpha_{i(i+1)} , \alpha_{i(i+1)} \right\rangle = 2 \qquad 1 \leq i \leq n-2 ,
\\ &
2 \frac{\left\langle \alpha_{i(i+1)},\alpha_{j(j+1)} \right\rangle}{\left\langle \alpha_{i(i+1)},\alpha_{i(i+1)} \right\rangle} =
\left\{ \begin{array}{ll}
-1 & \text{if } |i-j|=1,
\\
0 & \text{if } |i-j|>1 .
\end{array} \right.
\end{align*}
Thus, the Dynkin diagram of $\mathfrak l$ is of type $A_{n-2}$, and therefore, $\mathfrak l$ is isomorphic to $\mathfrak{sl}(n-1, \mathbb R)$ (see, e.g., \cite[Chapter 14]{SagleWalde73}).
\begin{figure}[!h]
\begin{picture}(270,30)
\put(10,20){\circle{4}}
\put(8,10){$\alpha_{12}$}
\put(12,20){\line(1,0){50}}
\put(64,20){\circle{4}}
\put(62,10){$\alpha_{23}$}
\put(66,20){\line(1,0){50}}
\put(118,20){\circle{4}}
\put(116,10){$\alpha_{34}$}
\put(120,20){\line(1,0){30}}
\put(152,20){\line(1,0){2}}
\put(156,20){\line(1,0){2}}
\put(160,20){\line(1,0){2}}
\put(164,20){\line(1,0){2}}
\put(168,20){\line(1,0){30}}
\put(200,20){\circle{4}}
\put(198,10){$\alpha_{(n-3)(n-2)}$}
\put(202,20){\line(1,0){50}}
\put(254,20){\circle{4}}
\put(252,10){$\alpha_{(n-2)(n-1)}$}
\end{picture}
\caption{Dynkin diagram of $\mathfrak l$}
\end{figure}
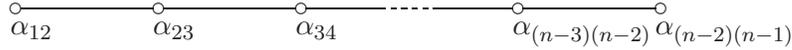
\end{proof}

\section{Representation of the Levi factor ${\mathfrak l}$ in $V=\mathbb{R}^{n}$}
\label{S representation l}

Considering ${\mathfrak l}$ as a subalgebra of the stochastic (matrix) algebra $\mathfrak{s}(n,\mathbb{R})$ defines its representation
$\phi: {\mathfrak l} \mapsto \mathfrak{gl}(n)$ in $V=\mathbb{R}^{n}$.
To characterize it, let us pick the basis $v_0, v_1, \ldots , v_{n-1}$, introduced in Section \ref{S Basis}, and consider the matrix $M\in \mathbb R^{n \times n}$: $M = \left( \begin{array}{c|c|c|c}
v_0 & v_1 & \cdots &v_{n-1}
\end{array} \right)$.

By construction, $M$ is orthogonal and  the mapping
\[\forall y \in \mathfrak l: \hspace*{5mm} y \mapsto M^*\phi(y)M,\]
defines an isomorphic representation of $\mathfrak l$ in $V=\mathbb R^n$.

Note that the subspace $V_1=\mbox{span}\{v_1, \ldots , v_{n-1}\}$ is invariant under $\phi(\mathfrak l)$
and therefore  we get:
\begin{equation}\label{rep_Rn}
\forall y \in \mathfrak l: \hspace*{5mm}  M^* \phi(y) M =
\left( \begin{array}{cc}
0 & 0 \\ 0 & M_1^*\phi(y)M_1
\end{array} \right),
\end{equation}
where $M_1 = \left( \begin{array}{c|c|c|c}
v_1 & v_2 & \cdots &v_{n-1}
\end{array} \right) \in \mathbb R^{n \times (n-1)}$.

The mapping \[y \mapsto \phi_1(y)=M_1^* \phi(y) M_1\] is a faithful representation of $\mathfrak l$ in $V_1=\mathbb R^{n-1}$.

Formula (\ref{rep_Rn}) identifies  the representation  of the semisimple Levi factor $\mathfrak l$  in $\mathbb{R}^n$ by stochastic matrices with  a direct sum of the
faithful representation $\phi_1$ in $\mathbb{R}^{n-1}$ and   the null $1$-dimensional  representation.

Besides
\begin{align*}
&
M_1^* A_{ij} M_1 = e_i \otimes e_j \qquad \text{for } i,j \in \{1,2,\ldots , n-1\}, \ i \neq j,
\\ &
M_1^* H_i M_1 = \mathrm{diag}(\gamma^i) \qquad \text{for } i = 1,2, \ldots, n-2.
\end{align*}
Therefore $\phi_1$ maps isomorphically  the Cartan subalgebra $\mathcal H$ onto the space of traceless diagonal $(n-1) \times (n-1)$ matrices, while $\phi_1( \mathcal A )$ coincides with the  space of  $(n-1) \times (n-1)$ matrices with vanishing diagonal.

\section{Affine group and affine Lie algebra}

It is noticed in \cite{Poole}  that the group of $\mathcal S(n,\mathbb{R})$ is isomorphic to the group ${\it Aff}(n-1,\mathbb R)$ of the affine maps $S: x \to Ax+B, \ x \in \mathbb{R}^{n-1}$.
We wish to discuss this relation, in the light of the   results obtained above.
We also discuss the relation between the elements of $\mathcal S (n, \mathbb R)$ and finite state space Markov processes outlined in Section \ref{S introduction}.

Let $\left( \mathbb R^n \right)^* $ be the dual of $\mathbb R^n$.
As usual, elements of $\mathbb R^n$ are identified with column vectors, and elements of $\left( \mathbb R^n \right)^*$ are identified with row vectors.
Further, we identify any vector $x=(x_1,x_2, \ldots, x_n) \in \mathbb R^n$ with the function $x: i \mapsto x_i$, with the domain $D_n= \{1,2, \ldots , n\}$, and identify any dual vector $p=(p_1,p_2, \ldots , p_n) \in \left( \mathbb R^n \right)^*$ with the (signed) measure $p$ on the set $D_n$ such that $p\{i\} =p_i$, for $i=1,2, \ldots, n$.
Thus, the product $px$ is identified with the integral $\int_{D_n} x \, dp$.

Each $S \in \mathcal S(n,\mathbb R) $ can be identified either with the linear endomorphism of $\mathbb R^n$, $S: x \mapsto Sx$
or with the linear endomorphism of $\left( \mathbb R^n \right)^*$, $S: p \mapsto pS$.

Let $Y$ be a $D_n$-valued Markov process and  $S \in \mathcal S^+(n,\mathbb R)$ be defined by  $
s_{ij} = \Pr \left\{ \left. Y_t=j \right| Y_s=i \right\} $ for every $ i, j \in D_n$ ($0\leq s \leq t < +\infty$, fixed). Then the vector $Sx$ is identified with the function $i \mapsto \mathbb E \left[ \left. x(Y_t) \right| Y_s=i \right]$, while the covector $pS$ is identified with the probability law of $Y_t$ assuming the probability law of $Y_s$ is $p$.

For every $S \in \mathcal S(n, \mathbb R)$, the map $p \mapsto pS$ preserves each affine space of the form  $\left\{ p \in \left( \mathbb R^n \right)^*: p \mathbf 1 = C \right\}$ ($C \in \mathbb R$, fixed), which is the space of  signed measures on $D_n$ such that $p(D_n) = C$.
Note that, $\{ t \mathbf 1 : t \in \mathbb R \}$ is the unique affine (linear) proper subspace of $\mathbb R^n$ which is preserved by all the maps $x \mapsto Sx$ with $S \in \mathcal S(n, \mathbb R)$.

Now, consider the group of  invertible affine maps $S:q \mapsto q A + B$, $q \in \left( \mathbb R^{n-1} \right)^*$.\footnote{The mapping
\begin{equation}
\notag
q = (q_1,q_2, \ldots ,q_{n-1} ) \mapsto \left( C- \sum\limits_{i=1}^{n-1} q_i, q_1, q_2, \ldots , q_{n-1} \right)
\end{equation}
coordinatizes the affine subspace $\left\{ p \in \left( \mathbb R^n \right)^*: p \mathbf 1 = C \right\}$.}
The group  can be identified with the subgroup $\mathbf A \left( \left( \mathbb R^{n-1} \right)^*\right)$ of $GL\left( \left( \mathbb R^n \right)^*\right)$:
\begin{equation}
\notag
\mathbf A \left( \left( \mathbb R^{n-1} \right)^*\right)=\left\{\left.\left( \begin{array}{cc}
1 & B \\ 0 & A
\end{array} \right) \right| \ A \in \mathbb R^{(n-1) \times (n-1)} \ \mbox{is nonsingular}\right\}.\end{equation}

The Lie algebra $\mathfrak a \left( \left( \mathbb R^{n-1} \right)^*\right)$ of $\mathbf A \left( \left( \mathbb R^{n-1} \right)^*\right)$ consists of matrices
\[\left( \begin{array}{cc}
0 & B \\ 0 & A
\end{array} \right).\]

Now, fix $S \in \mathcal S(n, \mathbb R)$.
By the results of Section \ref{S Basis}, $S$ can be written as 
\begin{equation}
\notag
S = \beta_0 Z + \sum_{i=1}^{n-1} \beta_i R_i + A,
\end{equation}
with $\beta_0, \beta_1, \ldots, \beta_{n-1} \in \mathbb R$, $A \in \mathfrak l$.
Taking into account that
\begin{align*}
&
Z v_0 = 0,
\\ &
Z v_i = \frac{1}{\sqrt{n-1}} v_i, \quad
R_i v_j =
\left\{ \begin{array}{ll}
v_0, & \text{if } j=i,
\\
0, & \text{if } j \neq i,
\end{array} \right.
\qquad
\text{for } i=1,2, \ldots, n-1,
\end{align*}
we get
\begin{equation}
\notag
M^* S M =
\left( \begin{array}{cc}
0 & \beta^* \\ 0 & M_1^*AM_1 + \frac{\beta_0}{\sqrt{n-1}}Id
\end{array} \right) ,
\end{equation}
where $\beta^* = (\beta_1, \beta_2, \ldots, \beta_{n-1} )$.
Thus, the similarity $S \mapsto M^* SM$ is an isomorphism from $\mathcal S(n, \mathbb R)$ into $\mathfrak a \left( \left( \mathbb R^{n-1} \right)^* \right)$.
In particular, the radical of $\mathfrak a \left( \left( \mathbb R^{n-1} \right)^* \right)$ is the linear space of matrices
\begin{equation}
\notag
\left( \begin{array}{cc}
0 & \beta^* \\ 0 & \beta_0 Id
\end{array} \right) ,
\qquad \beta_0, \beta_1 , \ldots , \beta_{n-1} \in \mathbb R ,
\end{equation}
while the Levi subalgebra of $\mathfrak a \left( \left( \mathbb R^{n-1} \right)^* \right)$ consists of matrices
\begin{equation}
\notag
\left( \begin{array}{cc}
0 & 0 \\ 0 & A
\end{array} \right) ,
\qquad A \in \mathfrak{sl}(n-1,\mathbb R ).
\end{equation}
Thus, the Levi splitting of $\mathfrak a \left( \left( \mathbb R^{n-1} \right)^* \right)$ corresponds to two connected Lie subgroups of $\mathbf A \left( \left( \mathbb R^{n-1} \right)^* \right)$:
The subgroup generated by the translations and rescalings of $\left( \mathbb R^{n-1} \right)^*$, and the subgroup of orientation and volume preserving linear transformations in $\left( \mathbb R^{n-1} \right)^*$.

\section{Minimal number of generators of $\mathfrak{s}(n,\mathbb R)$}

Finally we prove 

\begin{theorem}
\label{T generators of zrs}
The Lie algebra $\mathfrak{s}(n,\mathbb R)$ is generated by two matrices.
$\square$
\end{theorem}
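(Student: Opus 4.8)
The plan is to transport the question to the affine model of the previous section. By the isomorphism $S\mapsto M^{*}SM$ constructed there, $\mathfrak s(n,\mathbb R)$ is isomorphic to $\mathfrak a\bigl((\mathbb R^{n-1})^{*}\bigr)$, which is the affine Lie algebra $\mathfrak{gl}(m,\mathbb R)\ltimes V$ with $m=n-1$ and $V$ an $m$-dimensional irreducible $\mathfrak{gl}(m)$-module on which the centre $\mathbb R\,\mathrm{Id}$ acts by a nonzero scalar; write $\mathfrak{gl}(m)=\mathfrak{sl}(m)\oplus\mathbb R\,\mathrm{Id}$ and elements of the semidirect product as pairs $(A,\xi)$ with $[(A,\xi),(A',\xi')]=([A,A'],A\xi'-A'\xi)$. (The cases $n\le 2$ are trivial, so assume $m\ge 1$.) Thus it suffices to generate $\mathfrak{gl}(m)\ltimes V$ by two elements.

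First I would use the fact that the semisimple algebra $\mathfrak l\cong\mathfrak{sl}(m)$ is generated by two elements — either by citing Kuranishi's theorem, or, in the spirit of the rest of the paper, by exhibiting a concrete pair (for instance a principal nilpotent together with a lowest root vector) and checking it generates via Lemma~\ref{L multiplication table} and the root data of Theorem~\ref{T classification}. Since generating pairs of $\mathfrak{sl}(m)$ form a nonempty Zariski-open set, I may choose a generating pair $s_{1},s_{2}\in\mathfrak{sl}(m)$ with the extra property that $\mathrm{Id}+s_{1}$ acts invertibly on $V$. Then I would take $X=(s_{1}+\mathrm{Id},0)$ and $Y=(s_{2},v)$ with $v\in V\setminus\{0\}$, and set $\mathfrak g=\langle X,Y\rangle$.

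The argument that $\mathfrak g$ is everything has two steps. First, project onto $\mathfrak{gl}(m)$: the image is the subalgebra generated by $s_{1}+\mathrm{Id}$ and $s_{2}$; adjoining the central $\mathrm{Id}$ to it gives a subalgebra containing $s_{1},s_{2}$, hence all of $\mathfrak{sl}(m)$, hence all of $\mathfrak{gl}(m)$, and a short dimension count (a codimension-one subalgebra complementing the centre would be semisimple, hence equal to $\mathfrak{sl}(m)$, which omits $s_{1}+\mathrm{Id}$) forces the image to be $\mathfrak{gl}(m)$ itself; thus $\mathfrak g+V=\mathfrak{gl}(m)\ltimes V$. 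Second, $\mathfrak g\cap V$ is an ideal of $\mathfrak g$ contained in the abelian ideal $V$, and since $\mathfrak g$ surjects onto $\mathfrak{gl}(m)$ it is a $\mathfrak{gl}(m)$-submodule of $V$, so $\mathfrak g\cap V\in\{0,V\}$ by irreducibility of $V$. If $\mathfrak g\cap V=0$, then $\mathfrak g\to\mathfrak{gl}(m)$ is an isomorphism whose inverse is a splitting $A\mapsto(A,c(A))$ with $c$ a $1$-cocycle; evaluating the cocycle identity at $\mathrm{Id}$ forces $c(A)=A\,c(\mathrm{Id})$ for all $A$, so $\mathfrak g=\{(A,Aw):A\in\mathfrak{gl}(m)\}$ with $w=c(\mathrm{Id})$, and then $X\in\mathfrak g$ gives $(s_{1}+\mathrm{Id})w=0$, hence $w=0$ by invertibility, hence $\mathfrak g=\{(A,0):A\in\mathfrak{gl}(m)\}$, which cannot contain $Y=(s_{2},v)$ with $v\ne0$ — a contradiction. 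Therefore $\mathfrak g\cap V=V$, and combined with the first step, $\mathfrak g=\mathfrak{gl}(m)\ltimes V\cong\mathfrak s(n,\mathbb R)$. For the ``generic'' refinement announced in the abstract, I would observe that the set of pairs $(X,Y)\in\mathfrak s(n,\mathbb R)^{2}$ with $\langle X,Y\rangle=\mathfrak s(n,\mathbb R)$ is Zariski-open — it is the locus where a fixed finite family of iterated-bracket vectors (bracket length bounded by $\dim\mathfrak s(n,\mathbb R)$) attains full rank, i.e.\ where some minor does not vanish — and nonempty by the construction above, hence dense.

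The main obstacle is the input that $\mathfrak{sl}(n-1)$ is generated by two elements; everything else is formal Lie-theoretic bookkeeping. If one does not wish to cite Kuranishi, the laborious part is producing an explicit generating pair of $\mathfrak{sl}(n-1)$ and verifying, through the commutation relations of Lemma~\ref{L multiplication table}, that iterated brackets exhaust all the root spaces $\mathcal A_{ij}$ and the Cartan subalgebra $\mathcal H$; one must also check the very small cases ($n=2$) and that $\mathrm{Id}+s_{1}$ can be arranged to be invertible.
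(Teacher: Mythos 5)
Your proposal is correct, but it follows a genuinely different route from the paper. The paper does not quote Kuranishi's theorem as a black box; it \emph{adapts} his construction inside $\mathfrak{s}(n,\mathbb R)$ itself: choosing a vector $\gamma$ with pairwise distinct nonzero differences (Lemma \ref{L vector}), it takes the explicit pair $X=Z+\sum_k\beta_kH_k$, $Y=R_1+\sum_{i\neq j}A_{ij}$, kills the $R_1$-term in $\mathrm{ad}X\,Y$ by a scaling of $\gamma$, and uses a Vandermonde determinant together with the multiplication table of Lemma \ref{L multiplication table} to show that the iterated brackets $\mathrm{ad}^kX\,Y$ sweep out all the $A_{ij}$, after which $[R_1,A_{1i}]=R_i$ and $[A_{1j},A_{j1}]$ recover the radical part and the Cartan subalgebra. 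You instead transport the problem through the affine isomorphism $S\mapsto M^*SM$ to $\mathfrak{gl}(m)\ltimes V$ ($m=n-1$), take the two-generation of $\mathfrak{sl}(m)$ as input (Kuranishi, which is indeed in the paper's bibliography), and finish with a clean structural argument: surjectivity of the projection onto $\mathfrak{gl}(m)$ via the codimension-one/semisimplicity dichotomy, then irreducibility of $V$ plus the cocycle computation $c(A)=A\,c(\mathrm{Id})$ (using that $\mathrm{Id}+s_1$ is invertible) to rule out a splitting. I checked these steps and they are sound, including the small cases. What your approach buys is generality and conceptual clarity — it works for any semidirect product $\mathfrak{gl}(m)\ltimes V$ with $V$ irreducible and the centre acting invertibly, and it cleanly separates the semisimple input from the radical bookkeeping; what it gives up is self-containedness (the two-generation of $\mathfrak{sl}(m)$ is outsourced unless you carry out the explicit-pair alternative you only sketch) and the explicit generators in the stochastic basis that the paper exhibits, which also make the ``generic'' refinement immediate; your Zariski-openness remark recovers that refinement abstractly and is correct.
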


The argument in our proof is an adaptation of the argument used in \cite{Kuranishi51} to prove that every semisimple Lie algebra is generated by two elements.
We will use the following lemma:

\begin{lemma}
\label{L vector}
For every integer $n \geq 2$ there is a vector $\gamma \in \mathbb R^n$ such that
\begin{itemize}
\item[a)]
$\sum\limits_{i=1}^n \gamma_i=0$;
\item[b)]
$\gamma_i \neq 0 , \qquad i=1, \ldots , n$;
\item[c)]
$\gamma_i \neq \gamma_j, \qquad \forall i,j \in \{1, \ldots, n\}, i \neq j$;
\item[d)]
$\gamma_i-\gamma_j \neq \gamma_k - \gamma_\ell , \qquad \forall i,j,k,\ell \in \{1, \ldots, n\}, i \neq j, k \neq \ell, (i,j) \neq (k,\ell)$.
\end{itemize}
For every $\gamma$ satisfying (a)--(d) and every $\lambda \in \mathbb R\setminus \{0 \}$, $\lambda \gamma$ satisfies (a)--(d).
$\square$
\end{lemma}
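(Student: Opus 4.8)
The plan is to read each of the conditions (a)--(d) as the requirement that $\gamma$ avoid a certain linear hyperplane, to work inside the $(n-1)$-dimensional subspace $\Pi_n=\{\gamma\in\mathbb R^n:\gamma_1+\cdots+\gamma_n=0\}$ on which (a) holds automatically, and then to invoke the elementary fact that a real vector space of positive dimension is not the union of finitely many proper subspaces (e.g.\ because each proper subspace of $\Pi_n$ has Lebesgue measure zero in $\Pi_n$).

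Concretely, each ``forbidden'' configuration is cut out on $\Pi_n$ by the kernel of a linear functional $\gamma\mapsto w\cdot\gamma$: for (b) one takes $w=e_i$; for (c), $w=e_i-e_j$ with $i\ne j$; for (d), $w=e_i-e_j-e_k+e_\ell$ with $i\ne j$, $k\ne\ell$, $(i,j)\ne(k,\ell)$. The restriction of such a functional to $\Pi_n$ is nonzero — equivalently, the forbidden set is a \emph{proper} subspace of $\Pi_n$ — exactly when $w\notin\mathbb R\mathbf 1$. For $w=e_i$ (with $n\ge 2$) and for $w=e_i-e_j$ this is immediate, since the entries of $w$ are not all equal. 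For $w=e_i-e_j-e_k+e_\ell$ the entries sum to zero, so $w\in\mathbb R\mathbf 1$ would force $w=0$, i.e.\ $e_i+e_\ell=e_j+e_k$; comparing multisets, $\{i,\ell\}=\{j,k\}$, which yields either $i=j$ (barred by $i\ne j$) or $i=k,\ \ell=j$, i.e.\ $(k,\ell)=(i,j)$ (barred by $(i,j)\ne(k,\ell)$). Hence every forbidden set is a proper subspace of $\Pi_n$, and there are only finitely many of them.

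Since $\dim\Pi_n=n-1\ge 1$ and $\mathbb R$ is infinite, $\Pi_n$ cannot be covered by this finite family of proper subspaces, so there is $\gamma\in\Pi_n$ lying outside all of them; by construction this $\gamma$ satisfies (a)--(d). The final assertion is then just homogeneity: each relation in (a)--(d) is invariant under $\gamma\mapsto\lambda\gamma$ for $\lambda\ne 0$, because both sides of every (in)equality scale by the same factor $\lambda$ and $\lambda\gamma_i=0\iff\gamma_i=0$.

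The only part carrying real content is the case analysis in (d) certifying $w\notin\mathbb R\mathbf 1$ for all admissible index choices; the rest is the standard covering argument. One could instead exhibit an explicit $\gamma$ from a Sidon set $\{c_1,\dots,c_n\}$ (all differences $c_i-c_j$ distinct) via $\gamma_i=c_i-\tfrac1n\sum_j c_j$, which secures (a), (c), (d) at once, but then (b) must be recovered by an additional generic perturbation, so the hyperplane-avoidance argument seems cleaner.
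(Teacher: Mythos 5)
Your proof is correct, but it follows a genuinely different route from the paper. The paper argues by induction on $n$: for $n=2$ it takes $\gamma=(1,-1)$, and for the step $n\to n+1$ it perturbs, setting $\tilde\gamma=(\gamma_1,\ldots,\gamma_{n-1},\gamma_n-\varepsilon,\varepsilon)$ and observing that only finitely many values of $\varepsilon$ make some condition fail (the homogeneity claim is handled exactly as you do). You instead work directly in the hyperplane $\Pi_n$ and note that each violation of (b)--(d) confines $\gamma$ to the kernel of a linear functional $\gamma\mapsto w\cdot\gamma$ whose restriction to $\Pi_n$ is nonzero, then invoke the standard fact that a real vector space of positive dimension is not a finite union of proper subspaces. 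Your verification that $w=e_i-e_j-e_k+e_\ell\notin\mathbb R\mathbf 1$ under the constraints $i\neq j$, $k\neq\ell$, $(i,j)\neq(k,\ell)$ is the one step with real content, and your multiset comparison handles it correctly (including the degenerate cases such as $(k,\ell)=(j,i)$, where $w=2(e_i-e_j)$). Comparing the two: your argument is non-inductive, treats all conditions uniformly, and makes explicit why each ``bad'' set is a proper subspace, whereas the paper's induction is shorter to state but leaves to the reader the implicit check that no condition fails identically in $\varepsilon$; both ultimately rest on the same genericity principle, yours applied in the full $(n-1)$-dimensional space, the paper's along a one-parameter family.
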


\begin{proof}
For $n=2$, the Lemma holds with $\gamma =(1,-1)$.

Suppose that the Lemma holds for some $n \geq 2$, and fix $\gamma \in \mathbb R^n$ satisfying {\it (a)--(d)}.
Let
\[
\tilde \gamma = \left( \gamma_1, \ldots , \gamma_{n-1}, \gamma_n - \varepsilon, \varepsilon \right) .
\]
Since there are only finitely many values of $\varepsilon$ such that $\tilde \gamma$ fails at least one condition {\it (a)--(d)}, we see that the Lemma holds for $n+1$.

The last statement in the Lemma is obvious, since the equations in conditions {\it (a)--(d)} are homogeneous.
\end{proof}

\begin{proof}[Proof of Theorem \ref{T generators of zrs}]
Pick a vector $\gamma \in \mathbb R^{n-1}$ satisfying  conditions {\it (a)--(d)} of Lemma \ref{L vector}, let $\Gamma$ be the matrix \eqref{Eq matrix Gamma}, and $\beta= (\beta_1, \ldots ,\beta_{n-2}) = \gamma^T \Gamma$.
Let
$Z, R_i, A_{ij}, H_i$ be elements of our basis of $\mathfrak{s}(n , \mathbb R)$, and consider the matrices
\[
X = Z + \sum_{k=1}^{n-2}\beta_k H_k,
\qquad
Y = R_1 + \sum_{i \neq j} A_{ij} .
\]
Using the Lemma \ref{L multiplication table}, we obtain
\begin{align*}
\textrm{ad}XY = &
[Z,R_1] + \sum_{i \neq j}[Z,A_{ij}] + \sum_{k=1}^{n-2}\beta_k [H_k,R_1] + \sum_{k=1}^{n-2}\beta_k [H_k, A_{ij}] =
\\ = &
\frac{-1}{n-1}R_1 + 0 - \gamma_1 R_1 + \sum_{i \neq j}(\gamma_i-\gamma_j) A_{ij} =
\\ = &
-\left( \frac{1}{n-1} + \gamma_1 \right) R_1 + \sum_{i \neq j}(\gamma_i-\gamma_j) A_{ij} .
\end{align*}
Multiplying $\gamma$ by an appropriate non zero constant we can make $\gamma_1=\frac{-1}{n-1}$, and thus
\[
\textrm{ad}XY = \sum_{i \neq j} (\gamma_i-\gamma_j) A_{ij} .
\]
Iterating, we see that
\[
\textrm{ad}^kXY = \sum_{i \neq j} (\gamma_i-\gamma_j)^k A_{ij} \qquad \forall k \in \mathbb N.
\]
Let $m = (n-1)(n-2)$.
Since
\[
\mathrm{det}\left( \begin{array}{ccccc}
1 & 0 & 0 & \cdots & 0 \\
0 & 1 & 1 & \cdots & 1 \\
0 & 0 & \gamma_1 - \gamma_2 & \cdots & \gamma_{n-1} - \gamma_{n-2} \\
0 & 0 & (\gamma_1 - \gamma_2)^2 & \cdots & (\gamma_{n-1} - \gamma_{n-2})^2 \\
\vdots & \vdots & \vdots & & \vdots \\
0 & 0 & (\gamma_1 - \gamma_2)^m & \cdots & (\gamma_{n-1} - \gamma_{n-2})^m \\
\end{array} \right)
\neq
0 ,
\]
we see that the matrices $X,Y, \textrm{ad}XY, \ldots , \textrm{ad}^mXY$ span the same subspace as the matrices $X,R_1, A_{ij}, \ i,j \leq n-1, i \neq j$, and this subspace lies in $\mathfrak{Lie}\{X,Y\}$, the Lie algebra generated by $X,Y$.

By the Lemma \ref{L multiplication table}, $[R_1,A_{1i}] = R_i$, for $i=1,2, \ldots , n-1$.
Hence \[\{ R_2,\ldots , R_{n-1} \} \subset \mathfrak{Lie}\{X,Y\}.\]

Finally, also by the Lemma \ref{L multiplication table}, $[A_{ij},A_{ji}] = \sum\limits_{r=1}^{n-2}(\gamma_i^r- \gamma_j^r) H_r $.
This implies that $[A_{1j},A_{j1} ], \ j=2, \ldots , n-1$ are $n-2$ linearly independent elements of $\mathcal H$.
Hence, $\mathcal H \subset \mathfrak{Lie}\{ X,Y\}$ and $Z \in \mathfrak{Lie}\{X,Y\}$.
\end{proof}

\section{Acknowledgement}
The authors are grateful to A.A.Agrachev for stimulating remarks.

\bibliographystyle{amsplain}

\end{document}